\providecommand{\U}[1]{\protect\rule{.1in}{.1in}}
\newtheorem{theorem}{Theorem}
\theoremstyle{plain}
\newtheorem{acknowledgement}{Acknowledgement}
\newtheorem{definition}{Definition}
\newtheorem{lemma}{Lemma}
\newtheorem{proposition}{Proposition}
\numberwithin{equation}{section}
\begin{document}
\title[Generalized convexity and applications]{Generalized convexity and the existence of finite time blow-up solutions for
an evolutionary problem}
\author{Constantin P. Niculescu}
\address{Department of Mathematics, University of Craiova, Craiova RO-200585, Romania}
\email{cniculescu47@yahoo.com}
\author{Ionel Roven\c{t}a}
\address{Department of Mathematics, University of Craiova, Craiova RO-200585, Romania}
\email{roventaionel@yahoo.com}
\thanks{Corresponding author: Roven\c{t}a Ionel}
\subjclass[2000]{Primary 26A51, 35B40, 35B44. Secondary 35K92}
\keywords{Finite time blow-up solutions, $p$-Laplacian, generalized convexity, regularly
varying functions}

\begin{abstract}
In this paper we study a class of nonlinearities for which a nonlocal
parabolic equation with Neumann-Robin boundary conditions, for $p$-Laplacian,
has finite time blow-up solutions.

\end{abstract}
\maketitle

\section{Introduction}

It is a well known fact that convexity plays an important role in the
different parts of mathematics, including the study of boundary value
problems. The aim of our paper is to introduce a new class of generalized
convex functions and to illustrate its usefulness in establishing a sufficient
condition for the existence of finite time blow-up solutions for the
evolutionary problem%

\begin{equation}
\left\{
\begin{array}
[c]{ccc}%
u_{t}-\Delta_{p}u=f(|u|)-\frac{1}{m(\Omega)}%
{\displaystyle\int\nolimits_{\Omega}}
f(|u|)\,dx & \text{in} & \Omega\\
&  & \\
|\nabla u|^{p-2}\frac{\partial u}{\partial n}=0 & \text{on} & \partial
\Omega\,,
\end{array}
\right.  \label{ft1}%
\end{equation}
with the initial conditions
\begin{equation}
u(x,0)=u_{0}(x)\;\text{on}\;\Omega,\;\text{where}\;%
{\displaystyle\int\nolimits_{\Omega}}
u_{0}\,dx=0. \label{ft2}%
\end{equation}

Here $\Omega\subset\mathbb{R}^{N}$ is a bounded regular domain of class
$C^{2}$, $f:[0,\infty)\mapsto\lbrack0,\infty)$ is a locally Lipschitz
function, $m(\Omega)$ represents the Lebesgue measure of the domain $\Omega,$
and $\Delta_{p}=div(|\nabla u|^{p-2}\nabla u)$, for $p\geq2$, is the
$p$-Laplacian operator.

The particular case where $p=2$ was recently considered by Soufi, Jazar and
Monneau \cite{SJM}, and Jazar and Kiwan \cite{JK} (under the assumption that
$f$ is a power function of the form $f(u)=u^{\alpha},$ with $\alpha>1),$ and
also by the present authors \cite{NR2010} (for $f$ belonging to a larger class
of nonlinearities).

The problems of type (\ref{ft1}) \& (\ref{ft2}) arise naturally in mechanics,
biology and population dynamics. See \cite{JB}, \cite{LAC}, \cite{RSCC},
\cite{YC} and \cite{JFMG}. For example, if we consider a couple or a mixture
of two equations of the above type, the resulting problem describes the
temperatures of two substances which constitute a combustible mixture, or
represents a model for the behavior of densities of two diffusion biological
species which interact each other.

\section{Generalized convexity of order $\alpha$}

According to the classical Hermite-Hadamard inequality, the mean value of a
continuous convex function $f:[a,b]\rightarrow\mathbb{R}$ lies between the
value of $f$ at the midpoint of the interval $[a,b]$ and the arithmetic mean
of the values of $f$ at the endpoints of this interval, that is, \emph{\ }
\begin{equation}
\,f\left(  \frac{a+b}{2}\right)  \leq\frac{1}{b-a}\,\int_{a}^{b}%
\,f(x)\,dx\leq\frac{f(a)+f(b)}{2}\,. \tag{$HH$}%
\end{equation}
Moreover, each side of this double inequality characterizes convexity in the
sense that a real-valued continuous function $f$ defined on an interval
$I$\ is convex if its restriction to each compact subinterval $[a,b]\subset I$
verifies the left hand side of $(HH)$ (equivalently, the right hand side of
$(HH))$. See \cite{BP} and \cite{NP2006} for details.

In what follows we will be interested in a class of generalized convex
functions motivated by the right hand side of the Hermite-Hadamard inequality.

\begin{definition}
A real-valued function $f$ defined on an interval $[a,\infty)$ belongs to the
class $GC_{\alpha}$ $($for some $\alpha>0)$, if it is continuous, nonnegative,
and%
\begin{equation}
\frac{1}{\alpha+1}f(t)\geq\frac{1}{t-a}\int_{a}^{t}f(x)\,dx\text{\quad for
}t\text{ large enough.} \label{GCa}%
\end{equation}

\end{definition}

Using calculus, one can see easily that the condition (\ref{GCa}) is
equivalent to the fact that the ratio%
\begin{equation}
\frac{\frac{1}{t-a}\int_{a}^{t}f(x)\,dx}{\left(  t-a\right)  ^{\alpha}}
\label{defC}%
\end{equation}
is nondecreasing for $t$ bigger than a suitable value $A\geq a.$ In turn, this
implies that the mean value $\frac{1}{t-a}\int_{a}^{t}f(x)\,dx$ has a
polynomial growth at infinity.

According to the Hermite-Hadamard inequality, every nonnegative, continuous
and convex function $f:[a,\infty)\rightarrow\mathbb{R}$ with $f(a)=0$ belongs
to the class $GC_{1}.$ The converse is not true because the membership of a
function $f:[a,\infty)\rightarrow\mathbb{R}$ to the class $GC_{\alpha}$ yields
only an asymptotic inequality of the form%
\[
\frac{1}{\alpha+1}f(t)+\frac{\alpha}{\alpha+1}f(a)\geq\frac{1}{t-a}\int
_{a}^{t}f(x)\,dx\text{\quad for }t\text{ large enough.}%
\]

If $g\in C^{1}([0,\infty))$ and $g$ is nondecreasing, then the function
$f(x)=g(x)(x-a)^{\alpha}$ belongs to the class $CG_{\alpha}\left(
[0,\infty)\right)  ,$ whenever $\alpha>0$. In fact,%
\begin{align*}
\frac{1}{t-a}\int_{a}^{t}f(x)dx  &  =\frac{(t-a)^{\alpha}}{\alpha+1}%
g(t)-\frac{1}{t-a}\int_{a}^{t}g^{\prime}(x)\frac{\left(  x-a\right)
^{\alpha+1}}{\alpha+1}\;dx\\
&  \leq\frac{1}{\alpha+1}f(t).
\end{align*}

As a consequence, $(x+\sin x)x$ provides an example of function of class
$GC_{1}$ on $[0,\infty)$ which is not convex.

No positive constant can be a function of class $GC_{\alpha}$ for any
$\alpha>0$.

Also, the restriction of a function $f:[a,\infty)\rightarrow\mathbb{R}$ of
class $GC_{\alpha}$ to a subinterval $[b,\infty)$ is not necessarily a
function of class $GC_{\alpha}.$

In the sequel we will describe some other classes of functions of class
$GC_{\alpha}.$

The following concept of generalized convexity is due to S. Varosanec
\cite{SV} and generalizes the usual convexity, $s$-convexity, the
Godunova--Levin functions and $P$-functions.

\begin{definition}
Suppose that $h:[0,1]\rightarrow\mathbb{R}$ is a function such that
$h(\lambda)+h(1-\lambda)\geq1$ for all $\lambda\in\lbrack0,1].$ A nonnegative
function $f$ defined on an interval $I$ is called $h$-convex if
\begin{equation}
f(\lambda x+(1-\lambda)y)\leq h(\lambda)f(x)+h(1-\lambda)f(y). \label{sconv1}%
\end{equation}
whenever $\lambda\in\lbrack0,1]$,$\,$\ and $x,y\in I$.
\end{definition}

\begin{proposition}
\label{PropV}Suppose that $f$ is a nonnegative continuous function defined on
an interval $[a,\infty)$ such that the following two conditions are fulfilled:

$i)$ $f(a)=0;$

$ii)$ $f$ is $h$-$convex$ with respect to a function $h$ with $\int_{0}%
^{1}h(\lambda)\;d\lambda\leq\frac{1}{\alpha+1}$, for some $\alpha>0.$

Then $f$ belongs to the class $GC_{\alpha}$.
\end{proposition}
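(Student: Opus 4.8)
The plan is to verify the defining inequality \eqref{GCa} directly by bounding the mean value $\frac{1}{t-a}\int_a^t f(x)\,dx$ using the $h$-convexity hypothesis. The key observation is that for $x\in[a,t]$ we may write $x=\lambda a+(1-\lambda)t$ with $\lambda=\frac{t-x}{t-a}\in[0,1]$, so that condition $ii)$ gives
\[
f(x)\leq h(\lambda)f(a)+h(1-\lambda)f(t)=h(1-\lambda)f(t),
\]
where the term $h(\lambda)f(a)$ vanishes by condition $i)$. Integrating this pointwise estimate over $[a,t]$ and performing the change of variables $x=\lambda a+(1-\lambda)t$, $dx=(t-a)\,d\lambda$ (which reverses orientation, sending $x=a$ to $\lambda=1$ and $x=t$ to $\lambda=0$), I would obtain
\[
\int_a^t f(x)\,dx\leq f(t)\int_a^t h\!\left(\tfrac{x-a}{t-a}\right)dx=(t-a)f(t)\int_0^1 h(1-\lambda)\,d\lambda=(t-a)f(t)\int_0^1 h(\mu)\,d\mu,
\]
the last equality following from the substitution $\mu=1-\lambda$.

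Dividing through by $(t-a)$ and invoking the hypothesis $\int_0^1 h(\lambda)\,d\lambda\leq\frac{1}{\alpha+1}$ then yields
\[
\frac{1}{t-a}\int_a^t f(x)\,dx\leq f(t)\int_0^1 h(\mu)\,d\mu\leq\frac{1}{\alpha+1}f(t),
\]
which is precisely \eqref{GCa}; note that this in fact holds for every $t>a$, not merely for $t$ large. Since $f$ is nonnegative and continuous by assumption, all the requirements of membership in $GC_\alpha$ are met, completing the argument.

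I do not anticipate a serious obstacle here: the proof is essentially a one-line computation once the right substitution is set up. The only point requiring a little care is keeping track of the orientation in the change of variables and confirming that $\int_0^1 h(1-\lambda)\,d\lambda=\int_0^1 h(\lambda)\,d\lambda$, so that the constant $\frac{1}{\alpha+1}$ genuinely comes out of the hypothesis on $\int_0^1 h$. One should also remark that the normalization condition $h(\lambda)+h(1-\lambda)\geq 1$ built into the definition of $h$-convexity forces $\int_0^1 h(\lambda)\,d\lambda\geq \frac12$, so the hypothesis in $ii)$ is only consistent when $0<\alpha\leq 1$; this is worth noting but does not affect the validity of the implication.
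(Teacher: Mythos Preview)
Your proof is correct and is essentially identical to the paper's: both make the substitution $x=(1-\lambda)a+\lambda t$ (up to the trivial relabeling $\lambda\leftrightarrow 1-\lambda$), apply $h$-convexity pointwise, drop the $f(a)$-term, and integrate. Your closing observation that the normalization $h(\lambda)+h(1-\lambda)\geq 1$ forces $\int_0^1 h\geq\tfrac12$ and hence $\alpha\leq 1$ is a nice addendum not present in the paper.
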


\begin{proof}
In fact,
\begin{align*}
\frac{1}{t-a}\int_{a}^{t}f(x)\;dx  &  =\int_{0}^{1}f((1-\lambda)a+\lambda
t)\;d\lambda\\
&  \leq f(t)\int_{0}^{1}h(\lambda)\;d\lambda+f(a)\int_{0}^{1}h(1-\lambda
)\;d\lambda\\
&  \leq\frac{1}{\alpha+1}f(t).
\end{align*}

\end{proof}

An important class of nonlinearities in partial differential operators theory
is that of regularly varying functions, introduced by Karamata in \cite{K1930}.

\begin{definition}
A positive measurable function $f$ defined on interval $[a,\infty)$ (with
$a\geq0)$ is said to be regularly varying at infinity, of index $\sigma
\in\mathbb{R}$ \emph{(}abbreviated, $f\in RV_{\infty}(\sigma)$\emph{)},
provided that
\[
\lim_{x\rightarrow\infty}\frac{f(tx)}{f(x)}=t^{\sigma}\quad\text{for}%
\;\text{all}\;t>0.
\]

\end{definition}

All functions of index $\sigma$ are of the form
\[
f(x)=x^{\sigma}\exp\left(  a(x)+\int_{0}^{x}\frac{\varepsilon(s)}{s}ds\right)
,
\]
where $a(x)$ and $\varepsilon(x)$ are bounded and measurable, $a(x)\rightarrow
\alpha\in\mathbb{R}$ and $\varepsilon(x)\rightarrow0$ as $x\rightarrow\infty.$
In particular, so are
\[
x^{\sigma}\log x,\,\ x^{\sigma}\log\log x,~x^{\sigma}\exp\left(  \frac{\log
x}{\log\log x}\right)  ,\,\ x^{\sigma}\exp\left(  \left(  \log x\right)
^{1/3}\left(  \cos\left(  \log x\right)  ^{1/3}\right)  \right)  .
\]

See \cite{B} for details.

Semilinear problems with nonlinearities in the class of regularly varying
functions have been studied by many people. See the paper by C\^{\i}rstea and
R\u{a}dulescu \cite{CR2} and the references therein.

\begin{proposition}
\label{PropRV}If $f\in RV_{\infty}(\sigma)$ with $\sigma>0,$ then%
\[
\lim_{x\rightarrow\infty}\frac{F(x)}{xf(x)}=\frac{1}{\sigma+1},
\]
where%
\begin{equation}
F(x):=\int_{0}^{x}f(s)\,ds. \label{primf}%
\end{equation}
As a consequence, if $f$ is also continuous, then $f$ is of class $GC_{\alpha
},$ whenever $\alpha\in(0,\sigma).$
\end{proposition}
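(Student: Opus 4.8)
The plan is to establish the limit $\lim_{x\to\infty} F(x)/(xf(x)) = 1/(\sigma+1)$ by invoking Karamata's theory of regular variation, and then to deduce the $GC_\alpha$ membership as a soft consequence of this asymptotic together with the characterization \eqref{defC}. First I would recall Karamata's integration theorem: if $f \in RV_\infty(\sigma)$ with $\sigma > -1$, then $\int_0^x f(s)\,ds \in RV_\infty(\sigma+1)$ and, more precisely, $xf(x)/F(x) \to \sigma+1$ as $x\to\infty$. Since here $\sigma > 0 > -1$, this applies directly and yields the displayed limit. Alternatively, one can argue from scratch: writing $f(x) = x^\sigma L(x)$ with $L$ slowly varying, one substitutes and uses the uniform convergence theorem for slowly varying functions on compact $t$-intervals, so that $F(x) = \int_0^x s^\sigma L(s)\,ds$ can be compared to $\frac{x^{\sigma+1}}{\sigma+1} L(x)$; the slow variation of $L$ makes the error term negligible. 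I would probably just cite \cite{B} for Karamata's theorem rather than reproving it.

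For the second assertion, the idea is that $GC_\alpha$ membership only requires the ratio in \eqref{defC}, namely $\big(\frac{1}{t-a}\int_a^t f(x)\,dx\big)/(t-a)^\alpha$, to be eventually nondecreasing. I would first reduce from the interval $[a,\infty)$ to $[0,\infty)$: since $f$ is continuous and nonnegative, $\int_a^t f = F(t) - F(a)$, and $F(a)$ is a fixed constant, so as $t\to\infty$ one has $\frac{1}{t-a}\int_a^t f(x)\,dx = \frac{F(t)}{t}(1+o(1))$. Combining with Proposition \ref{PropRV}'s limit gives
\[
\frac{\frac{1}{t-a}\int_a^t f(x)\,dx}{(t-a)^\alpha} \sim \frac{F(t)}{t\,(t-a)^\alpha} \sim \frac{1}{\sigma+1}\cdot\frac{f(t)}{(t-a)^\alpha}.
\]
Now $f(t)/(t-a)^\alpha = f(t)/t^\alpha \cdot (1+o(1))$, and $f(t)/t^\alpha \in RV_\infty(\sigma - \alpha)$ with index $\sigma - \alpha > 0$ by hypothesis $\alpha < \sigma$; a positive-index regularly varying function tends to $\infty$. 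So the ratio in \eqref{defC} tends to $+\infty$ — but that alone does not give monotonicity.

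The main obstacle, then, is upgrading "the ratio tends to infinity" to "the ratio is eventually nondecreasing," because a function can tend to $+\infty$ while oscillating. I expect the clean way around this is to differentiate: let $\Phi(t) = \frac{1}{t-a}\int_a^t f(x)\,dx$ and consider $\psi(t) = \Phi(t)/(t-a)^\alpha$. Then $\psi'(t) \ge 0$ is equivalent, after clearing denominators, to $(t-a)\Phi'(t) \ge \alpha\,\Phi(t)$, i.e. to $(\alpha+1)$ times the average being dominated appropriately; writing $\Phi'(t)$ out via the fundamental theorem of calculus, $(t-a)\Phi'(t) = f(t) - \Phi(t)$, so the condition becomes exactly $f(t) \ge (\alpha+1)\Phi(t) = \frac{\alpha+1}{t-a}\int_a^t f(x)\,dx$, which is \eqref{GCa} itself. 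Thus it suffices to verify \eqref{GCa} directly for $t$ large. From the asymptotics above, $\frac{\alpha+1}{t-a}\int_a^t f(x)\,dx \sim \frac{\alpha+1}{\sigma+1} f(t)$, and since $\alpha < \sigma$ we have $\frac{\alpha+1}{\sigma+1} < 1$; hence for $t$ large enough $\frac{\alpha+1}{t-a}\int_a^t f(x)\,dx \le f(t)$, giving \eqref{GCa}. This is the crux, and it hinges on the strict inequality $\alpha < \sigma$ providing the slack $\frac{\alpha+1}{\sigma+1} < 1$ needed to absorb the $o(1)$ error. I would close by remarking that continuity of $f$ is used only to make $F$ differentiable with $F' = f$ (so that $\Phi' $ exists and the reduction to \eqref{GCa} is legitimate), which is why it is listed as an added hypothesis in the second half of the proposition.
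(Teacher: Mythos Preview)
Your proposal is correct. For the limit $F(x)/(xf(x))\to 1/(\sigma+1)$ you invoke Karamata's theorem, whereas the paper proves this directly: the substitution $s=tx$ gives $F(x)/(xf(x))=\int_0^1 f(tx)/f(x)\,dt$, and dominated convergence (using the bound $f(tx)/f(x)\le t^\sigma+1$ for $x$ large) yields $\int_0^1 t^\sigma\,dt=1/(\sigma+1)$. This is exactly the standard proof of Karamata's theorem, so your citation and the paper's computation are the same argument at different levels of detail. For the $GC_\alpha$ consequence the paper's proof simply stops after establishing the limit, so your explicit derivation is a genuine addition; note, though, that it can be shortened considerably. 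With $a=0$, condition \eqref{GCa} reads $F(t)/(tf(t))\le 1/(\alpha+1)$, and since the left side tends to $1/(\sigma+1)<1/(\alpha+1)$ whenever $\alpha<\sigma$, the inequality holds for all sufficiently large $t$ immediately---your detour through differentiating the ratio \eqref{defC} and recovering \eqref{GCa} from $\psi'\ge 0$ is not needed. One small correction to your closing remark: continuity of $f$ is required not only to ensure $F'=f$, but because it is part of Definition~1 of the class $GC_\alpha$ itself.
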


\begin{proof}
To prove this, consider the change of variable $s=tx$ which yields
\[
F(x)=\int_{0}^{x}f(s)\,ds=\int_{0}^{1}xf(tx)\,dt.
\]

The continuity of $f$ and the fact that $f\in RV_{\infty}(\sigma)$ assure the
existence of a $\delta>0$ such that for every $x>\delta$ we have
\[
\frac{f(tx)}{f(x)}\leq t^{\sigma}+1,
\]
whence the integrability of the function $t\rightarrow\frac{f(tx)}{f(x)}$ on
$[0,1]$. Then%
\begin{align*}
\lim_{x\rightarrow\infty}\frac{F(x)}{xf(x)}  &  =\lim_{x\rightarrow\infty}%
\int_{0}^{1}\frac{f(tx)}{f(x)}\,dt\\
&  =\int_{0}^{1}\lim_{x\rightarrow\infty}\frac{f(tx)}{f(x)}\,dt=\int_{0}%
^{1}t^{\sigma}dt=\frac{1}{\sigma+1}.
\end{align*}
The commutation of the limit with the integral is motivated by the Lebesgue
dominated convergence theorem.
\end{proof}

Another important class of nonlinearities which appear in connection with the
study of boundary blow-up problems for elliptic equations is the class of
functions satisfying the Keller-Osserman condition. See \cite{Rad},
\cite{DDGR}, \cite{IR2008} and \cite{NR2010}.

\begin{definition}
\label{KO} A nonnegative and nondecreasing function $f\in C^{1}([0,\infty))$
with $f(0)=0$ satisfies the generalized Keller-Osserman condition of order
$p>1$ if
\begin{equation}
\int_{1}^{\infty}\frac{1}{(F(t))^{1/p}}\;dt<\infty, \label{f2}%
\end{equation}
where $F$ is the primitive of $f$ given by the formula (\ref{primf}).
\end{definition}

If $f\in RV_{\infty}(\sigma+1)$ with $\sigma+2>p>1$ is a nondecreasing and
continuous function, then $F\in RV_{\infty}(\sigma+2)$ and $F^{-1/p}\in
RV_{\infty}((-\sigma-2)/p)$. Since $(-\sigma-2)/p<-1$, we infer that
$F^{-1/p}\in L^{1}([1,\infty))$ and thus $f$ satisfies the generalized
Keller-Osserman condition.

It is worth to notice that the function $\exp(t)$ is not regularly varying at
infinity though satisfies the generalized Keller-Osserman condition and
belongs also to any class $GC_{\alpha}$ with $\alpha>0$.

Necessarily, if a function $f$ satisfies the generalized Keller-Osserman
condition of order $p>1$, then%
\begin{equation}
\lim_{t\rightarrow\infty}\frac{F(t)}{t^{p}}=\infty, \label{growthF}%
\end{equation}
while $\frac{F(t)}{t^{p}}$ may be (or may be not) a monotonic function.

If $\frac{F(t)}{t^{p}}$ is nondecreasing for some $p>2$, then the function $f$
belongs to the class $GC_{p-1}$. In particular, this is the case of the
function $f(t)=pt^{p-1}\log(t+1)+\frac{t^{p}}{t+1}$ (whose primitive is
$F(t)=t^{p}\log(t+1)).$ Notice that this function does not satisfy the
generalized Keller-Osserman condition of order $p.$

We end this section by discussing the connection Definition 1 with a class of
functions due to W. Orlicz.

\begin{definition}
\label{Orl}An $\emph{N}$\emph{-function} is any function $M:[0,\infty
)\rightarrow\mathbb{R}$ of the form%
\[
M(x)=\int_{0}^{x}p(t)\,dt,
\]
where $p$ is nondecreasing and right continuous, $p(0)=0,$ $p(t)>0$ for $t>0,$
and $\lim_{t\rightarrow\infty}p(t)=\infty.$

An $N$\emph{-}function\emph{ }$M$ satisfies the $\Delta_{2}$-\emph{condition}
if there exist constants $k>0$ and $x_{0}\geq0$ such that%
\[
M(2x)\leq kM(x)\text{\quad for all }x\geq x_{0}.
\]

\end{definition}

Any $N$\emph{-}function\emph{ }$M$ is convex and plays the following properties:

\medskip

$N1)$ $M(0)=0$ and $M(x)>0$ for $x>0;$

$N2)$ $\frac{M(x)}{x}\rightarrow0$ as $x\rightarrow0$ and $\frac{M(x)}%
{x}\rightarrow\infty$ as $x\rightarrow\infty.$

$\medskip$

Two examples of $N$-functions which satisfy the $\Delta_{2}$-condition are
$\frac{x^{p}}{p}$ (for $p\geq1)$ and $t(\log t)^{+}$.

The $N$-functions which satisfy the $\Delta_{2}$-condition are instrumental in
the theory of Orlicz spaces (which extend the $L^{p}(\mu)$ spaces). Their
theory is available in many books, such as \cite{KR} and \cite{RR}, and has
important applications to interpolation theory \cite{BS} and Fourier analysis
\cite{ZYG}.

According to \cite{KR}, page 23, the constant $k$ which appears in the
formulation of $\Delta_{2}$-condition is always greater than or equal to 2.

\begin{proposition}
Every $N$-function $M:[0,\infty)\rightarrow\mathbb{R}$ which satisfies the
$\Delta_{2}$-condition belongs to the class $GC_{\alpha},$ whenever $\alpha
\in(0,2\log_{2}k)$.
\end{proposition}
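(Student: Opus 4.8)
The plan is to work with the reformulation of membership in $GC_{\alpha}$ recorded just after Definition 1: since $M(0)=0$, the function $M$ lies in $GC_{\alpha}$ as soon as the ratio
\[
R(t)=\frac{1}{t^{\alpha+1}}\int_{0}^{t}M(x)\,dx
\]
is nondecreasing for $t$ large, and an easy differentiation shows this is the same as requiring $t\,M(t)\geq(\alpha+1)\int_{0}^{t}M(x)\,dx$ eventually. So everything reduces to proving, for each fixed $\alpha<2\log_{2}k$, an asymptotic estimate of the form
\[
\int_{0}^{t}M(x)\,dx\leq\frac{t}{\alpha+1}\,M(t)\qquad\text{for }t\text{ large.}
\]

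First I would quantify the growth information hidden in the $\Delta_{2}$-condition. Iterating $M(2x)\leq kM(x)$ from $x=x_{0}$ gives $M(2^{j}x_{0})\leq k^{j}M(x_{0})$; for a given $t\geq x_{0}$, choosing $j$ with $2^{j}x_{0}\leq t<2^{j+1}x_{0}$ and using the monotonicity of $M$ yields the polynomial upper bound $M(t)\leq C\,t^{\log_{2}k}$ for $t\geq x_{0}$, while convexity together with $M(0)=0$ forces $M(t)/t$ to be nondecreasing, hence $M(t)\geq c\,t$. Since $\log_{2}k\geq 1$, the primitive $\phi(t):=\int_{0}^{t}M(x)\,dx$ is finite, of comparable polynomial size, and the fixed number $\int_{0}^{x_{0}}M(x)\,dx$ is negligible against $t\,M(t)$ as $t\to\infty$; moreover $\phi$ itself satisfies $\phi(2t)=2\int_{0}^{t}M(2y)\,dy\leq 2k\,\phi(t)$, so $\phi$ is again an $N$-function, now with $\Delta_{2}$-constant $2k$ and associated exponent $\log_{2}(2k)=1+\log_{2}k$.

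The core of the argument is then the comparison of $\int_{x_{0}}^{t}M(x)\,dx$ with $t\,M(t)$. I would cut $[x_{0},t]$ into the dyadic pieces $[t/2^{\,j+1},\,t/2^{\,j}]$, bound $M$ on each by its value at the right endpoint, and combine the convexity bound $M(t/2^{\,j})\leq 2^{-j}M(t)$ with the $\Delta_{2}$-information relating $M(t/2^{\,j})$ to $M(t)$ through the factor $k^{-j}=2^{-j\log_{2}k}$; summing the two competing geometric series and tracking how the ratios $1/2$ and $1/k$ enter should yield a bound $\int_{x_{0}}^{t}M(x)\,dx\leq\theta\,t\,M(t)$ with $\theta$ as close as one wishes to $1/(2\log_{2}k+1)$, so that any $\alpha<2\log_{2}k$ gives $1/(\alpha+1)>\theta$. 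A cleaner packaging may be to run this inductively along the chain $M,\phi,\dots$ of $N$-functions, reading off the exponent $1+\log_{2}k$ for $\phi$ and translating it back to the threshold for $M$.

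The step I expect to be the real obstacle is precisely this balancing act. The $\Delta_{2}$-condition is, by its nature, an upper bound on how fast $M$ grows, and so it tends to produce lower bounds for $M$ at scales smaller than $t$; what one actually needs is an upper bound on $\int_{0}^{t}M$ relative to $t\,M(t)$, i.e.\ a statement that $M$ is, in an averaged sense, genuinely superlinear. Convexity alone only delivers $\int_{0}^{t}M(x)\,dx\leq\tfrac12 t\,M(t)$, which is membership in $GC_{1}$; the delicate point is therefore to organise the dyadic sum so that the $\Delta_{2}$-information is exploited in the direction that strictly improves on this, and to make the passage from the asymptotic inequality to honest membership in $GC_{\alpha}$ — absorbing the fixed contribution of $[0,x_{0}]$ and the fact that the inequality is needed only for large $t$ — uniform in $\alpha$ on compact subsets of $(0,2\log_{2}k)$.
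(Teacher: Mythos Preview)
Your last paragraph puts its finger on the real problem, and the obstacle you describe is not merely delicate but fatal. The $\Delta_{2}$-inequality $M(2x)\le kM(x)$ iterates \emph{downward} to $M(t/2^{j})\ge k^{-j}M(t)$, which is a \emph{lower} bound on $M$ at the dyadic scales below $t$; it therefore cannot be combined with the convexity estimate $M(t/2^{j})\le 2^{-j}M(t)$ to produce anything sharper than what convexity alone already gives, namely $\int_{0}^{t}M\le\tfrac12\,tM(t)$ and hence $M\in GC_{1}$. The step you describe as ``relating $M(t/2^{j})$ to $M(t)$ through the factor $k^{-j}$'' has the inequality pointing the wrong way, and no reorganisation of the dyadic sum can reverse it: there are not ``two competing geometric series'' here, only one upper bound (from convexity) and one lower bound (from $\Delta_{2}$).

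The paper's own proof commits exactly this error in explicit form. From the correct iteration $M(tx)\le M(x)\,t^{2\log_{2}k}$ for $t\ge 2$ and $x$ large, it silently passes to $M(ts)\le M(t)\,s^{2\log_{2}k}$ for $s\in[0,1]$, which is the \emph{opposite} inequality. In fact the proposition as stated is false: take $M(x)=x^{p}$ with $p>1$, so that $M(2x)=2^{p}M(x)$, $k=2^{p}$, and $2\log_{2}k=2p$; but $\tfrac{1}{t}\int_{0}^{t}x^{p}\,dx=\tfrac{t^{p}}{p+1}$, so $M\in GC_{\alpha}$ precisely for $\alpha\le p$, not for all $\alpha<2p$. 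What would push $\alpha$ beyond $1$ is a \emph{lower} growth hypothesis of the type $M(2x)\ge kM(x)$ with $k>2$; the $\Delta_{2}$-condition, being an upper growth bound, simply carries no such information.
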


\begin{proof}
Since $M$ is nondecreasing,%
\[
M(tx)=M(2^{\log_{2}t}x)\leq M(2^{\left[  \log_{2}t\right]  +1}x),
\]
and taking into account the $\Delta_{2}$-condition we infer that
\begin{align*}
M(tx)  &  \leq M(x)k^{\left[  \log_{2}t\right]  +1}\leq M(x)k^{\log_{2}t+1}\\
&  \leq M(x)t^{2\log_{2}k},
\end{align*}
for $x\ $big enough and $t\geq2.$ Hence,%
\begin{align*}
\int_{0}^{t}M(x)dx  &  =\int_{0}^{1}tM(ts)ds\\
&  \leq\int_{0}^{1}tM(t)s^{2\log_{2}k}ds=\frac{1}{2\log_{2}k+1}tM(t)
\end{align*}
and the proof is done.
\end{proof}

\section{An application to the existence of finite time blow-up solutions}

This section is devoted to the existence of finite time blow-up solutions of
the evolutionary $p$-Laplacian problem%
\begin{equation}
u_{t}-\Delta_{p}u=f(|u|)-\frac{1}{m(\Omega)}%
{\displaystyle\int\nolimits_{\Omega}}
f(|u|)\,dx\text{\quad in }\Omega\label{eveq}%
\end{equation}
with Neumann-Robin boundary values,%
\begin{equation}
|\nabla u|^{p-2}\frac{\partial u}{\partial n}=0\text{\quad on }\partial
\Omega\,, \label{ncond}%
\end{equation}
and the initial conditions%
\begin{equation}
u(x,0)=u_{0}(x)\;\text{on}\;\Omega,\;\text{where}\;%
{\displaystyle\int\nolimits_{\Omega}}
u_{0}\,dx=0. \label{inval}%
\end{equation}

As was mentioned in the introduction, we restrict ourselves to the case where
$\Omega\subset\mathbb{R}^{N}$ is a bounded regular domain of class $C^{2}$,
and $f:[0,\infty)\mapsto\lbrack0,\infty)$ is a locally Lipschitz function;
$m(\Omega)$ represents the Lebesgue measure of the domain $\Omega,$ and
$\Delta_{p}$, for $p\geq2$, is the $p$-Laplacian operator.

The purpose of this section, is to extend a natural energetic criterion for
the blow-up in finite time of solutions of $(3.1)-(3.3).$ Our proof relies on
the same idea used by Jazar and Kiwan \cite{JK} in the case where $p=2$ and
$f$ is a power function.

We start by noticing that each solution $u$ of the problem above has the
property
\[
\int_{\Omega}u\,dx=0
\]
because the integral in the right hand side of (\ref{eveq}) is $0$ and
\begin{align*}
\frac{d}{dt}\Big(\int_{\Omega}u\,dx\Big)  &  =\int_{\Omega}u_{t}%
\,dx=\int_{\Omega}\Delta_{p}u\,dx\\
&  =\int_{\Omega}div(|\nabla u|^{p-2}\nabla u)\,dx=0.
\end{align*}
Hence, by the initial condition (\ref{inval}), we have $\int_{\Omega}u\,dx=0$.

Next, it is easy to see that for $p>1$ the energy
\[
E(u(t))=\int_{\Omega}\Big(\frac{1}{p}|\nabla u|^{p}-\int_{0}^{u}%
f(|\tau|)\,d\tau\Big)dx,
\]
of any solution $u$ of our evolutionary problem is nonincreasing in time. In
fact,%
\begin{align*}
\frac{dE(u(t))}{dt}  &  =\int_{\Omega}\left(  |\nabla u|^{p-2}\nabla
u_{t}\nabla u-u_{t}f(|u|)\right)  \,dx\\
&  =\int_{\partial\Omega}\frac{\partial u}{\partial n}|\nabla u|^{p-2}%
u_{t}\,d\sigma-\int_{\Omega}u_{t}\Delta_{p}u\,dx-\int_{\Omega}u_{t}%
\,f(|u|)\,dx\\
&  =-\int_{\Omega}u_{t}(\Delta_{p}u+f(|u|))\,dx=-\int_{\Omega}u_{t}^{2}\,dx,
\end{align*}
and by integrating both sides over $[0,t]$ we obtain the formula
\begin{equation}
E(u(t))=E(u_{0})-\int_{0}^{t}\int_{\Omega}u_{t}^{2}\,dxdt,\quad\text{for\ all}%
\;t>0. \label{ft5}%
\end{equation}

According to this formula, if the initial energy $E(u_{0})$ is nonpositive,
then $E(u(t))$ is nonpositive for all $t>0.$ In the case of generalized convex
functions of order $\alpha$, with $\alpha>\frac{1}{p-1}$ we have%
\begin{equation}
C\int_{\Omega}uf(|u|)\,dx\geq\int_{\Omega}\int_{0}^{u}f(|t|)\,dtdx\geq\frac
{1}{p}\int_{\Omega}|\nabla u|^{p}, \label{ftinegmod}%
\end{equation}
where $C=\frac{1}{1+\alpha}\in(0,\frac{p-1}{p})$.

\begin{theorem}
\emph{(}The energetic criterion for blow-up in finite time, case\textbf{
}$p\geq2$\emph{)} \label{ftthm1} Assume that $f:[0,\infty)\mapsto
\lbrack0,\infty)$ is a locally Lipschitz function belonging to the class
$GC_{a}$, with $\alpha>\frac{1}{p-1}$, and let $u$ be a solution of the
problem $(3.1)-(3.3)$ corresponding to an initial data $u_{0}\in
C(\overline{\Omega})$, $u_{0}$ not identically zero.

If $E(u_{0})\leq0,$ then $u,$ as a function of $t,$ cannot be in $L^{\infty
}((0,T);L^{2}(\Omega))$ for all $T>0$. In other word, there is $T>0$ such that%
\begin{equation}
\underset{t\rightarrow T-}{\lim\sup}\left\Vert u(t)\right\Vert _{L^{2}}%
=\infty. \label{blowup}%
\end{equation}

\end{theorem}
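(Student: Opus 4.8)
The plan is to argue by contradiction, following the energy/concavity method of Jazar--Kiwan. Suppose that $u\in L^{\infty}((0,T);L^{2}(\Omega))$ for every $T>0$, and set
\[
\varphi(t)=\int_{0}^{t}\int_{\Omega}u^{2}\,dx\,ds+\beta,
\]
for a constant $\beta>0$ to be chosen later. First I would compute $\varphi'(t)=\int_{\Omega}u^{2}\,dx$ and then
\[
\varphi''(t)=2\int_{\Omega}uu_{t}\,dx=2\int_{\Omega}u\,\Delta_{p}u\,dx+2\int_{\Omega}uf(|u|)\,dx-\frac{2}{m(\Omega)}\Big(\int_{\Omega}u\,dx\Big)\Big(\int_{\Omega}f(|u|)\,dx\Big),
\]
where the last term vanishes since $\int_{\Omega}u\,dx=0$ (established above). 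Integrating by parts and using the boundary condition (3.2) gives $\int_{\Omega}u\,\Delta_{p}u\,dx=-\int_{\Omega}|\nabla u|^{p}\,dx$, so
\[
\varphi''(t)=-2\int_{\Omega}|\nabla u|^{p}\,dx+2\int_{\Omega}uf(|u|)\,dx.
\]

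Next I would inject the energy. Writing $\int_{\Omega}|\nabla u|^{p}\,dx=p\,E(u(t))+p\int_{\Omega}\int_{0}^{u}f(|\tau|)\,d\tau\,dx$ and using $E(u(t))\le E(u_{0})\le0$ (from (3.5)), we get
\[
\varphi''(t)\ge -2p\int_{\Omega}\int_{0}^{u}f(|\tau|)\,d\tau\,dx+2\int_{\Omega}uf(|u|)\,dx.
\]
Now the key point: by the $GC_{\alpha}$ hypothesis with $\alpha>\frac{1}{p-1}$, inequality (3.7) gives $\int_{\Omega}\int_{0}^{u}f(|t|)\,dt\,dx\le C\int_{\Omega}uf(|u|)\,dx$ with $C=\frac{1}{1+\alpha}<\frac{p-1}{p}$, hence
\[
\varphi''(t)\ge 2(1-pC)\int_{\Omega}uf(|u|)\,dx\ge 2(1-pC)\,\frac1C\int_{\Omega}\int_{0}^{u}f(|\tau|)\,d\tau\,dx,
\]
and since $1-pC>0$, the right-hand side is nonnegative; moreover, again using $E(u_{0})\le 0$, $\int_{\Omega}\int_{0}^{u}f(|\tau|)\,d\tau\,dx\ge\frac1p\int_{\Omega}|\nabla u|^{p}\,dx\ge0$, so in particular $\varphi''\ge0$ and $\varphi$ is convex. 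The aim is to upgrade this to a differential inequality of the form $\varphi''\varphi\ge(1+\gamma)(\varphi')^{2}$ for some $\gamma>0$, which forces $\varphi^{-\gamma}$ to be concave and to reach $0$ in finite time — contradicting $\varphi(t)\ge\beta>0$ for all $t$.

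To obtain that inequality I would combine the above with the energy identity (3.5): from $E(u(t))=E(u_0)-\int_0^t\int_\Omega u_t^2$ and $E(u(t))\le 0$ we extract a lower bound on $\int_\Omega uf(|u|)$ in terms of $\int_0^t\int_\Omega u_t^2$, so that
\[
\varphi''(t)\ge 4(1+\gamma)\int_0^t\int_\Omega u_t^2\,dx\,ds
\]
for a suitable $\gamma=\gamma(p,\alpha)>0$ (this is where $\alpha>\frac1{p-1}$, equivalently $pC<p-1<p$, is used quantitatively). Then the Cauchy--Schwarz inequality applied twice gives
\[
(\varphi'(t)-\varphi'(0))^{2}=\Big(2\int_0^t\int_\Omega uu_t\Big)^2\le 4\Big(\int_0^t\int_\Omega u^2\Big)\Big(\int_0^t\int_\Omega u_t^2\Big)\le 4\,(\varphi(t)-\beta)\Big(\int_0^t\int_\Omega u_t^2\Big),
\]
so $\varphi''(t)\varphi(t)\ge(1+\gamma)(\varphi'(t)-\varphi'(0))^{2}$; absorbing the $\varphi'(0)$ term by taking $\beta$ large enough relative to $\varphi'(0)$ (using $\varphi''\ge0$, $\varphi'$ nondecreasing) yields $\varphi''\varphi\ge(1+\gamma/2)(\varphi')^{2}$ on $[0,T)$. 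Finally, $(\varphi^{-\gamma/2})''=-\frac{\gamma}{2}\varphi^{-\gamma/2-2}(\varphi\varphi''-(1+\gamma/2)(\varphi')^2)\le 0$, so $\varphi^{-\gamma/2}$ is positive, concave, and has negative slope once $\varphi$ starts increasing (which happens because $u_0\not\equiv0$ forces $\varphi'$ to become positive); hence it hits $0$ at some finite $T^*$, i.e. $\varphi(t)\to\infty$ as $t\to T^{*-}$, contradicting $u\in L^\infty((0,T^*);L^2(\Omega))$. This proves (3.8).

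\textbf{Main obstacle.} The delicate step is extracting the strict gain $1+\gamma$ with $\gamma>0$ in $\varphi''\ge 4(1+\gamma)\int_0^t\int_\Omega u_t^2$: one must carefully balance the three quantities $\int_\Omega|\nabla u|^p$, $\int_\Omega\int_0^u f$, and $\int_\Omega uf(|u|)$ using both (3.5) and (3.7), and check that the slack left by $pC<p-1$ is exactly what produces a positive $\gamma$; a secondary technical point is justifying the differentiations of $\varphi$ and the integration by parts at the (merely locally Lipschitz, weak-solution) regularity level, and ensuring $\varphi'$ genuinely becomes positive, which uses that $u_0$ is not identically zero together with the fact that $u$ is not a stationary solution.
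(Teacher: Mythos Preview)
Your overall strategy is the paper's: set $H(t)=\int_0^t h$ with $h=\frac12\int_\Omega u^2$ (your $\varphi$ is $2H+\beta$), derive $h'(t)\ge\frac1C\int_0^t\int_\Omega u_t^2$ from the equation, the $GC_\alpha$ inequality~(3.7), and $E(u_0)\le0$, then combine with Cauchy--Schwarz on $\int_0^t\int_\Omega uu_t$ to get $(H'-H'(0))^2\le 2C\,H\,H''$, and exploit $\frac{1}{2C}>1$ (since $C<\frac{p-1}{p}\le\frac12$ for $p\ge2$) to run a concavity argument on a negative power of $H$. Your $\gamma>0$ is exactly the surplus $\frac{1}{2C}-1$, so the step you flag as the ``main obstacle'' is in fact immediate once (3.7) is in hand.

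The genuine gap is the phrase ``absorbing the $\varphi'(0)$ term by taking $\beta$ large enough''. Both $\varphi'=\int_\Omega u^2$ and $\varphi''$ are independent of $\beta$; enlarging $\beta$ only adds $\beta\,\varphi''$ to $\varphi\varphi''$, and your lower bound $\varphi''(t)\ge 4(1+\gamma)\int_0^t\int_\Omega u_t^2$ vanishes at $t=0$, so this extra term cannot uniformly absorb the cross term $2(1+\gamma)\varphi'(0)\varphi'(t)$ needed to pass from $(\varphi'-\varphi'(0))^2$ to a constant multiple of $(\varphi')^2$. The paper handles this differently: it proves an additional inequality $h'(t)\ge c\,h(t)$ with $c>0$ (this is (3.10), obtained from $h'\ge(\frac{1}{Cp}-p+1)\int_\Omega|\nabla u|^p$ together with the Poincar\'e inequality for mean-zero functions), which forces $H'(t)=h(t)\to\infty$; then for $t\ge T_0$ one has $(H'-H'(0))^2\ge\beta_0\,(H')^2$ for any fixed $\beta_0<1$, and the concavity of $H^{-q}$ is established on $[T_0,\infty)$, the contradiction being that a positive concave function on a half-line cannot tend to $0$ at infinity. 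Replacing your $\beta$-trick by this Poincar\'e step closes the argument.
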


Notice that the condition $E(u_{0})\leq0$ in Theorem \ref{ftthm1} is also
necessary for the blow-up in finite time (of the $L^{2}$ norm of $u(t)).$ In
fact, (\ref{blowup}) forces that
\[
\inf\left\{  E(u(t)):0<t<T\right\}  =-\infty.
\]
This can be argued by contradiction. If $E(u(t))\geq-C_{0}$, for some
$C_{0}>0$, then the function%
\[
h(t):=\frac{1}{2}\int_{\Omega}u^{2}(x,t)\,dx
\]
verifies the condition
\begin{align*}
\frac{1}{2}h^{\prime}(t)  &  =\int_{\Omega}uu_{t}dx\leq\frac{1}{2}\int
_{\Omega}\left(  u^{2}+u_{t}^{2}\right)  dx\\
&  =\frac{1}{2}(h(t)-E^{\prime}(u(t))),
\end{align*}
which yields
\[
(h(t)+E(u(t))+C_{0})^{\prime}\leq h(t)\leq h(t)+E(u(t))+C_{0}.
\]
Therefore%
\[
h(t)\leq h(t)+E(u(t))+C_{0}\leq(h(0)+E(u_{0})+C_{0})e^{t},\;\text{for all
}t\in(0,T),
\]
and thus the $L^{2}$-norm of $u(t)$ is bounded.

The proof of Theorem \ref{ftthm1} needs a preparation.

\begin{lemma}
\label{ftlem2} Under the assumptions of Theorem \ref{ftthm1} the two auxiliary
functions%
\[
h(t):=\frac{1}{2}\int_{\Omega}u^{2}(x,t)\,dx\quad\text{and\quad}H(t):=\int
_{0}^{t}h(s)\,ds
\]
verify the following three conditions$:$%
\begin{gather}
h^{\prime}(t)\geq\frac{1}{C}\int_{0}^{t}\int_{\Omega}u_{t}^{2}\,dt;
\label{ft6}\\
h^{\prime}(t)\geq2\Big(\frac{1}{Cp}-p+1\Big)\lambda h(t)\text{, for}%
\;\text{some }\lambda>0;\label{ft7}\\
\frac{1}{2C}\Big(H^{\prime}(t)-H^{\prime}(0)\Big)^{2}\leq H(t)H^{\prime\prime
}(t). \label{ft8}%
\end{gather}

\end{lemma}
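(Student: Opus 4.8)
The plan is to establish the three inequalities in order, reusing the two identities already derived in the text: the energy balance \eqref{ft5} and the fact that $\int_\Omega u\,dx=0$, together with the generalized-convexity estimate \eqref{ftinegmod}. For \eqref{ft6}, I would start from $h'(t)=\int_\Omega u u_t\,dx$ and use the equation $u_t=\Delta_p u+f(|u|)-\frac{1}{m(\Omega)}\int_\Omega f(|u|)\,dx$. Since $\int_\Omega u\,dx=0$, the constant term integrates against $u$ to zero, so $h'(t)=\int_\Omega u(\Delta_p u+f(|u|))\,dx$. Integrating by parts and using the boundary condition \eqref{ncond} gives $h'(t)=-\int_\Omega|\nabla u|^p\,dx+\int_\Omega u f(|u|)\,dx$. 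Now invoke \eqref{ftinegmod}: $\int_\Omega uf(|u|)\,dx\ge\frac{1}{Cp}\int_\Omega|\nabla u|^p\,dx$ and also $\int_\Omega uf(|u|)\,dx\ge\int_\Omega\int_0^u f(|\tau|)\,d\tau\,dx$. Combining $h'(t)=-\int_\Omega|\nabla u|^p+\int_\Omega uf(|u|)$ with the identity $E(u(t))=\frac1p\int_\Omega|\nabla u|^p-\int_\Omega\int_0^u f(|\tau|)\,d\tau\,dx$ gives $h'(t)=\big(\tfrac{1}{Cp}\cdot\text{something}\big)$... more precisely, write $h'(t)=-pE(u(t))+\int_\Omega uf(|u|)\,dx-\int_\Omega\int_0^u f(|\tau|)\,d\tau\,dx+(1-p)\,\tfrac1?$; the cleanest route is: from $h'(t)=-\int_\Omega|\nabla u|^p+\int_\Omega uf(|u|)$ and $pE(u(t))=\int_\Omega|\nabla u|^p-p\int_\Omega\int_0^u f$, eliminate $\int_\Omega|\nabla u|^p$ to get $h'(t)=-pE(u(t))-p\int_\Omega\int_0^u f+\int_\Omega uf(|u|)$. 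Using $\int_\Omega uf(|u|)\ge\int_\Omega\int_0^u f$ and $E(u(t))\le E(u_0)-\int_0^t\int_\Omega u_t^2$ (from \eqref{ft5}, with $E(u_0)\le0$), one extracts $h'(t)\ge\frac1C\int_0^t\int_\Omega u_t^2\,dt$ after absorbing the $\int_\Omega\int_0^u f$ terms via the constant $C=\frac{1}{1+\alpha}$; this is the step where I expect to have to be careful tracking how the factor $(1/C-p)$ closes up.

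For \eqref{ft7}, the idea is to bound $h'(t)$ below by a multiple of $h(t)$ itself. From the expression $h'(t)=-\int_\Omega|\nabla u|^p+\int_\Omega uf(|u|)\,dx$ and the lower bound $\int_\Omega uf(|u|)\,dx\ge\frac{1}{Cp}\int_\Omega|\nabla u|^p\,dx$, we get $h'(t)\ge\big(\frac{1}{Cp}-1\big)\int_\Omega|\nabla u|^p\,dx$; combining instead with $E(u(t))\le0$, i.e. $\frac1p\int_\Omega|\nabla u|^p\le\int_\Omega\int_0^u f\le C\int_\Omega uf(|u|)$, lets me also write $h'(t)\ge\big(\frac{1}{Cp}-p+1\big)\int_\Omega|\nabla u|^p\,dx$. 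Since $\alpha>\frac{1}{p-1}$ forces $C<\frac{p-1}{p}$, hence $\frac{1}{Cp}>\frac{p}{p-1}\cdot\frac1p\cdot p=\frac{p}{p-1}>1\ge p-1$ when... (here I need $\frac{1}{Cp}-p+1>0$, which is exactly $C<\frac{1}{p(p-1)}$—so this may instead require the weaker conclusion that $\frac{1}{Cp}-p+1$ is simply the stated coefficient, possibly negative, and the real content is the Poincaré-type inequality). The key analytic input is the Poincaré inequality on the $C^2$ bounded domain $\Omega$ for functions of mean zero: $\int_\Omega|\nabla u|^p\,dx\ge\lambda_1\int_\Omega|u|^p\,dx\ge 2\lambda h(t)$ after relating the $L^p$ and $L^2$ norms (using boundedness of $\Omega$ and Hölder when $p\ge2$). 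So $h'(t)\ge 2\big(\frac{1}{Cp}-p+1\big)\lambda h(t)$ with $\lambda>0$ the resulting constant. The main obstacle here is pinning down the precise Poincaré/Sobolev constant chain and confirming the sign bookkeeping, but no deep new idea is needed.

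For \eqref{ft8}, this is a concavity-type (Levine) argument. Note $H'(t)=h(t)\ge0$, $H'(0)=h(0)$, and $H''(t)=h'(t)$. By \eqref{ft6}, $H''(t)=h'(t)\ge\frac1C\int_0^t\int_\Omega u_t^2\,dt$. On the other hand, $H(t)=\int_0^t h(s)\,ds=\int_0^t\big(\frac12\int_\Omega u^2(x,s)\,dx\big)ds$ and $H'(t)-H'(0)=h(t)-h(0)=\int_0^t h'(s)\,ds=\int_0^t\int_\Omega u u_t\,dx\,ds$. Now apply Cauchy–Schwarz in the product space $L^2((0,t)\times\Omega)$:
\[
\Big(H'(t)-H'(0)\Big)^2=\Big(\int_0^t\!\!\int_\Omega u u_t\,dx\,ds\Big)^2\le\Big(\int_0^t\!\!\int_\Omega u^2\,dx\,ds\Big)\Big(\int_0^t\!\!\int_\Omega u_t^2\,dx\,ds\Big)=2H(t)\Big(\int_0^t\!\!\int_\Omega u_t^2\,dx\,ds\Big).
\]
Combining with $\int_0^t\int_\Omega u_t^2\le C\,h'(t)=C\,H''(t)$ from \eqref{ft6} gives $\big(H'(t)-H'(0)\big)^2\le 2C\,H(t)H''(t)$, which is \eqref{ft8}. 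I expect \eqref{ft6} to be the only genuinely delicate step; \eqref{ft7} is a Poincaré inequality plus sign-chasing, and \eqref{ft8} is Cauchy–Schwarz feeding on \eqref{ft6}.
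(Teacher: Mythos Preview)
Your plan and the paper's coincide in structure, and your arguments for \eqref{ft7} and \eqref{ft8} are essentially the paper's own: Poincar\'e for mean-zero functions on $\Omega$ for \eqref{ft7}, and Cauchy--Schwarz in $L^2((0,t)\times\Omega)$ fed by \eqref{ft6} for \eqref{ft8}. In fact, for \eqref{ft7} your direct bound $h'(t)\ge(\tfrac{1}{Cp}-1)\int_\Omega|\nabla u|^p$ is sharper than the paper's coefficient $\tfrac{1}{Cp}-p+1$ and immediately implies it since $p\ge2$.

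The gap is in \eqref{ft6}. Your elimination via $pE$ gives
\[
h'(t)=-pE(u)+\int_\Omega uf(|u|)\,dx-p\int_\Omega\!\!\int_0^u f(|\tau|)\,d\tau\,dx,
\]
and after applying \eqref{ftinegmod} you obtain $h'(t)\ge -pE(u)+(\tfrac{1}{C}-p)\int_\Omega\int_0^u f$; dropping the last term yields the constant $p$ in front of $\int_0^t\int_\Omega u_t^2$, not $\tfrac1C$, so this does not recover \eqref{ft6} as stated. The paper's maneuver is different: starting from $h'(t)=-\int_\Omega|\nabla u|^p+\int_\Omega uf(|u|)$, it first \emph{weakens} $-\int_\Omega|\nabla u|^p$ to $-(p-1)\int_\Omega|\nabla u|^p$ (this is exactly where the hypothesis $p\ge2$ enters), and then applies $\int_\Omega uf(|u|)\ge\tfrac1C\int_\Omega\int_0^u f$ from \eqref{ftinegmod}. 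The resulting expression regroups identically as
\[
-\tfrac1C\,E(u)+\Big(\tfrac{1}{Cp}-p+1\Big)\int_\Omega|\nabla u|^p\,dx.
\]
Dropping the gradient term gives $h'(t)\ge-\tfrac1C E(u)$, and then the energy identity \eqref{ft5} together with $E(u_0)\le0$ yields \eqref{ft6} with the stated constant $\tfrac1C$. So the single missing idea in your sketch is the $-(p-1)$ weakening, which is precisely what makes the coefficient come out as $\tfrac1C$ rather than $p$ and simultaneously produces the coefficient $\tfrac{1}{Cp}-p+1$ you were looking for in \eqref{ft7}.
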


\begin{proof}
In fact,%
\begin{align*}
h^{\prime}(t)  &  =\int_{\Omega}u_{t}u\,dx=\int_{\Omega}u(\Delta
_{p}u+f(|u|))\,dx\\
&  \geq\int_{\Omega}\Big(-(p-1)|\nabla u|^{p}+\frac{1}{C}\int_{0}%
^{u}f(|t|)\,dt\Big)dx\\
&  =-\frac{1}{C}\int_{\Omega}\Big(\frac{1}{p}|\nabla u|^{p}-\int_{0}%
^{u}f(|t|)\,dt\Big)dx+\Big(\frac{1}{Cp}-p+1\Big)\int_{\Omega}|\nabla
u|^{p}\,dx.
\end{align*}

Hence,%
\begin{align*}
h^{\prime}(t)  &  \geq-\frac{1}{C}E(u)+\Big(\frac{1}{Cp}-p+1\Big)\int_{\Omega
}|\nabla u|^{p}\,dx\\
&  \geq-\frac{1}{C}E(u)\\
&  =-\frac{1}{C}E(u_{0})+\frac{1}{C}\int_{0}^{t}\int_{\Omega}u_{t}^{2}\,dxdt\\
&  \geq\frac{1}{C}\int_{0}^{t}\int_{\Omega}u_{t}^{2}\,dxdt.
\end{align*}

On the other hand, by the Poincar\'{e} inequality, we have
\begin{align*}
h^{\prime}(t)  &  \geq\Big(\frac{1}{Cp}-p+1\Big)\int_{\Omega}|\nabla
u|^{2}\,dx\\
&  \geq\Big(\frac{1}{Cp}-p+1\Big)\lambda\int_{\Omega}u^{2}\,dx\\
&  =2\Big(\frac{1}{Cp}-p+1\Big)\lambda h(t),
\end{align*}
where $\lambda$ is a suitable positive constant.

We pass now to the proof of (\ref{ft8}). Since
\begin{align*}
H^{\prime}(t)-H^{\prime}(0)  &  =\int_{0}^{t}h^{\prime}(s)\,ds=\int_{0}%
^{t}\int_{\Omega}uu_{t}\,dxdt\\
&  \leq\Big(\int_{0}^{t}\int_{\Omega}u^{2}\,dxdt\Big)^{1/2}\Big(\int_{0}%
^{t}\int_{\Omega}u_{t}^{2}\,dxdt\Big)^{1/2}\\
&  \leq(2H(t))^{1/2}(Ch^{\prime}(t))^{1/2}=(2CH(t)H^{\prime\prime}(t))^{1/2},
\end{align*}
by (\ref{ft6}) we infer that%
\[
H^{\prime}(t)-H^{\prime}(0)=\int_{0}^{t}h^{\prime}(s)\,ds\geq0,
\]
and thus%
\[
\frac{1}{2C}\Big(H^{\prime}(t)-H^{\prime}(0)\Big)^{2}\leq H(t)H^{\prime\prime
}(t).
\]

\end{proof}

\emph{Proof of Theorem \ref{ftthm1}}. Suppose, by \emph{reduction ad
absurdum}, that the solution $u(x,\cdot)$ exists in%
\[
L^{\infty}((0,T);L^{2}(\Omega))\,
\]
for all $T>0$. By (\ref{ft7}),%
\begin{equation}
\lim_{t\rightarrow\infty}H^{\prime}(t)=\lim_{t\rightarrow\infty}h(t)=\infty,
\label{ft10}%
\end{equation}
which yields, for each $\beta\in(0,1/C),$ the existence of a number $T_{0}>0$
such that for all $t>T_{0}$,%
\[
\beta H^{\prime}(t)^{2}\leq\frac{1}{C}\Big(H^{\prime}(t)-H^{\prime
}(0)\Big)^{2}.
\]

Now, by (\ref{ft8}) we obtain%
\[
\beta H^{\prime}(t)^{2}\leq2H(t)H^{\prime\prime}(t).
\]

We will show, by considering the function $G(t)=H(t)^{-q}$, for a suitable
$q>0,$ that the last inequality leads to a contradiction. In fact,%
\begin{align*}
G^{\prime\prime}(t)  &  =qH(t)^{-q-2}\Big((q+1)(H^{\prime}(t))^{2}%
-H(t)H^{\prime\prime}(t)\Big)\\
&  \leq qH(t)^{-q-2}\Big(\frac{2(q+1)}{\beta}-1\Big)H(t)H^{\prime\prime}(t),
\end{align*}
for all $t\geq T_{0},$ so that for $\beta\in\left(  0,1/C\right)  $ and
$q\in\left(  0,1/(2C)-1\right)  $ with $2(q+1)<\beta<1/C,$ the corresponding
function $G(t)$ is concave.

By (\ref{ft10}), $\lim_{t\rightarrow\infty}H(t)=\infty$, whence $\lim
_{t\rightarrow\infty}G(t)=0$. Thus $G$ provides an example of a concave and
strictly positive function which tends to $0$ at infinity, a fact which is not
possible. Consequently $u,$ as a function of $t,$ cannot be in $L^{\infty
}((0,T);L^{2}(\Omega))$ for all $T>0$. The proof of Theorem \ref{ftthm1} is done.

\begin{acknowledgement}
This research is supported by CNCSIS Grant $420/2008$.
\end{acknowledgement}


\begin{thebibliography}{99}                                                                                               %


\bibitem {BP}M. Bessenyei and Z. P\'{a}les, Characterization of convexity via
Hadamard's inequality, Math. Ineq. Appl., 9 (2006), 53--62.

\bibitem {JB}J. Bebernes and D. Eberly, \emph{Mathematical Problems from
Combustion Theory}, Springer-Verlag, New York, 1989.

\bibitem {BS}C. Bennett and R. Sharpley, \emph{Interpolation of Operators},
Academic Press, 1988.

\bibitem {B}N. H. Bingham, C. M. Goldie and J. L. Teugels, \emph{Regular
Variation,} Encyclopedia of Mathematics and its Applications vol. \textbf{27},
Cambridge Univ. Press, 1989.

\bibitem {LAC}L. A. Caffarrelli and A. Friedman, Blow-up of solutions of
nonlinear heat equation, \emph{J. Math. Anal. Appl.} \textbf{129} (1988), 409--419.

\bibitem {RSCC}R. S. Cantrell and C. Cosner, Diffusive logistic equation with
indefinite weights: Population models in disrupted environments II, \emph{SIAM
J. Math. Anal.} \textbf{22} (1991), 1043-1064.

\bibitem {CR2}F. C\^{\i}rstea and V. R\u{a}dulescu, Nonlinear problems with
boundary blow-up: A Karamata regular variation theory approach,
\textit{Asymptot. Anal.} \textbf{46} (2006), 275-298.

\bibitem {YC}Y. Chen, Blow-up for a system of heat equations with nonlocal
sources and absorptions, \emph{Comp. Math. Appl.} \textbf{48} (2004), 361-372.

\bibitem {DDGR}S. Dumont, L. Dupaigne, O. Goubet and V. R\u{a}dulescu, Back to
the Keller-Osserman condition for boundary blow-up solutions, \emph{Advanced
Nonlinear Studies} \textbf{7} (2007), 271-298.

\bibitem {JFMG}J. Furter and M. Grinfield, Local vs. nonlocal interactions in
populations dynamics, \emph{J. Math. Biology} \textbf{27} (1989), 65-80.

\bibitem {GP}F. Gladiali and G. Porru, Estimates for explosive solutions to
$p$-Laplace equations, Progress in Partial Differential Equations,
Pont-a-Mousson, 1997, vol.1, \textit{Pitman Res. Notes Math. Ser.}, Longman
Harlow \textbf{383} (1998), 117-127.

\bibitem {JK}M. Jazar and R. Kiwan, Blow-up of a non-local semilinear
parabolic equation with Neumann boundary conditions, Ann. Inst. H.
Poincar\'{e} Anal. Non Lineaire 25 (2008), 215-218.

\bibitem {K1930}J. Karamata, Sur une mode de croissance r\'{e}guli\`{e}re,
Mathematica, Cluj, 4 (1930), 38-53.

\bibitem {KR}M. A. Krasnosel'skii and Ya. B. Rutickii, Convex Functions and
Orlicz Spaces, P. Nordhoff, Groningen, 1961.

\bibitem {NP2006}C. P. Niculescu and L.-E. Persson, \emph{Convex Functions and
their Applications. A Contemporary Approach}, CMS Books in Mathematics vol.
23, Springer-Verlag, New York, 2006.

\bibitem {NR2010}C. P. Niculescu and I. Roven\c{t}a, Large Solutions for
Semilinear Parabolic Equations Involving Some Special Classes of
Nonlinearities, \emph{Discrete Dynamics in Nature and Society}, Volume 2010,
Article ID 491023, 11 pages.

\bibitem {LPPS}L. E. Payne and P. W. Schaefer, Lower bounds for blow-up time
in parabolic problems under Dirichlet conditions, \emph{J. Math. Anal. Appl.}
328 (2007), 1196--1205.

\bibitem {Rad}V. R\u{a}dulescu, Singular phenomena in nonlinear elliptic
problems. From blow-up boundary solutions to equations with singular
nonlinearities, in \emph{Handbook of Differential Equations}: \emph{Stationary
Partial Differential Equations}, Vol. \textbf{4} (Michel Chipot, Editor),
North-Holland Elsevier Science, Amsterdam, 2007, pp. 483-591.

\bibitem {RR}M. M. Rao and Z. D. Ren, \emph{Theory of Orlicz Spaces}, Marcel
Dekker, 1991.

\bibitem {RV}A. W. Roberts and D. E. Varberg, \textit{Convex Functions}, Pure
and Applied Mathematics, vol. \textbf{57}, Academic Press, New York, 1973.

\bibitem {IR2008}I. Roven\c{t}a, Boundary asymptotic and uniqueness of
solution for a problem with $p(x)$-Laplacian, \emph{J. Ineq. Appl.}, vol.
2008, Article ID 609047, 14 pages.

\bibitem {SJM}A. El. Soufi, M. Jazar and R. Monneau, A Gamma convergence
argument for the blow-up of a non-local semilinear parabolic equation with
Neumann boundary conditions, \emph{Ann. Inst. H. Poincar\'{e} Anal. Non
Lineaire} \textbf{24} (1) (2007), 17-39.

\bibitem {SV}S. Varosanec, On h-convexity, \emph{J. Math. Anal. Appl.}
\textbf{326} 1 (2007), 303-311.

\bibitem {ZYG}A. Zygmund, \emph{Trigonometric Series}, Vols. 1 and 2,
Cambridge University Press, 2nd Edition, 1959.
\end{thebibliography}
\end{document}